\documentclass[12pt, reqno]{amsart}
\usepackage{mathrsfs, amsmath, amsthm, amscd, amsfonts, amssymb}

\newtheorem{theorem}{Theorem}[section]
\newtheorem{lemma}[theorem]{Lemma}

\newtheorem{definition}[theorem]{Definition}

\newtheorem{corollary}[theorem]{Corollary}

\newcommand{\bC}{\mathbb{C}}
\newcommand{\bD}{\mathbb{D}}
\newcommand{\bT}{\mathbb{T}}

\newcommand{\bA}{\mathbb{A}}

\newcommand{\bZ}{\mathbb{Z}}

\newcommand{\A}{\mathbb A}

\newcommand{\clb}{\mathcal{B}}

\newcommand{\clh}{\mathcal{H}}

\newcommand{\NI}{\noindent}

\numberwithin{equation}{section}

\begin{document}

\setcounter{page}{1}

\title[Toeplitz operators on the annulus]{Characterizations of Toeplitz operators on the annulus}

\author[Jain]{Shubham Jain}
\address{Indian Statistical Institute, Statistics and Mathematics Unit, 8th Mile, Mysore Road, Bangalore, 560059, India}
\email{shubhamjain\_pd@isibang.ac.in, shubhamjain4343@gmail.com}

\author[Sarkar]{Jaydeb Sarkar}
\address{Indian Statistical Institute, Statistics and Mathematics Unit, 8th Mile, Mysore Road, Bangalore, 560059, India}
\email{jay@isibang.ac.in, jaydeb@gmail.com}

\subjclass[2020]{47B35, 30H10, 47B07, 15B05}

\keywords{Toeplitz operators, annulus, Hardy space, compact operators}


\begin{abstract}
We establish algebraic characterizations of Toeplitz operators on the Hardy space on the annulus. As an application, we characterize compact perturbations of Toeplitz operators.
\end{abstract}

\maketitle

\tableofcontents

\section{Introduction}

Let $r \in (0,1)$ be a fixed number, and let $\bA$ denote the annulus centered at the origin with inner radius $r$ and outer radius $1$:
\[
\bA = \{z \in \mathbb C: r < |z| < 1 \}.
\]
Define the Hardy space on $\A$ as
\begin{align*}
H^2(\A)  =\Bigg\{f: \bA \rightarrow \bC \text{ holomorphic}: \sup_{r<t<1}\int_{0}^{2\pi}|f(te^{i\theta})|^2d\theta < \infty \Bigg\}.
\end{align*}
This space was studied in detail by Sarason in his seminal paper \cite{S1965}. It has since been studied from various function-theoretic and operator-theoretic perspectives. See, for example, \cite{A1974, AD, JA, MC, MS}, and the recent work \cite{DN2025, MP}, as well as the references therein.

The purpose of this paper is to present characterizations of Toeplitz operators on $H^2(\A)$. We then use this characterization to describe compact perturbations of Toeplitz operators on $H^2(\A)$, which we call asymptotic Toeplitz operators, following Barr\'{i}a and Halmos \cite{BH1982} and Feintuch \cite{F1988}. We point out that Abrahamse in \cite{A1974} extended several properties of classical Toeplitz operators on the Hardy space of the open unit disc to Hardy spaces on the annulus (in fact, on multiply connected domains). To carry out our discussion, we introduce a well-known representation of $H^2(\A)$ based on the boundary values of its functions.

Note that the boundary of the annulus consists of two disjoint circles:
\[
\partial \A =\bT \sqcup r \bT,
\]
where $\bT = \{z\in\bC: |z|=1\}$ and $r \bT := \{r z : z \in \bT\}$. Define $L^2(\partial \A)$ as the Hilbert space of all square-integrable functions on $\partial \A$:
\[
L^2(\partial \bA ) = \Big\{f:\partial \bA \longrightarrow \bC \text{ measurable}: \|f\|_{\partial \bA} := \Big(\int_{\partial\bA}
|f(z)|^2d\sigma(z)\Big)^{\frac{1}{2}} <\infty\Big\},
\]
where $\sigma$ is Lebesgue measure on $\partial \A$, scaled so that $\sigma(\partial \A)=2$ (see Section \ref{sec: TO} for more details). Denote by $H^2(\partial \A)$ the closure in $L^2(\partial\A)$ of the set of rational functions on $\mathbb C$ with no poles in $\overline{\A}$. Because functions in $H^2(\bA)$ have non-tangential limits at almost every point of $\partial \A$, each function in $H^2(\A)$ corresponds to a function in $H^2(\partial \A)$. This correspondence is an isometric isomorphism and is onto. We therefore identify these spaces and use the notations interchangeably (see \cite[Theorem 1]{S1965}).

Finally, let $L^\infty(\partial \A)$ denote the space of all essentially bounded functions on $\partial \A$. This space forms a Banach algebra under the supremum norm. For $\varphi \in L^\infty(\partial \A)$, the \textit{Toeplitz operator} $T_{\varphi}$ on $H^2(\partial \bA)$ with symbol $\varphi$ is defined by
\[
T_\varphi h = P_{H^2(\partial \A)} (\varphi h),
\]
for all $h \in H^2(\partial \A)$, where $P_{H^2(\partial \A)}$ denotes the orthogonal projection from $L^2(\partial \A)$ onto $H^2(\partial \A)$. The simplest nontrivial example of a Toeplitz operator corresponds to the holomorphic symbol $\varphi(z) = z$, $z \in \A$, which we denote by $M_z$. In this case,
\[
M_z h = T_z h = zh,
\]
for all $h \in H^2(\partial \A)$. We now present our algebraic characterizations of Toeplitz operators. For a Hilbert space $\clh$, let $\clb(\clh)$ denote the space of bounded linear operators on $\clh$.

\begin{theorem}\label{BHthm}
Let $T \in \clb(H^2(\A))$. The following are equivalent:

\begin{enumerate}
\item $T$ is a Toeplitz operator.
\item $-M_z^{*2}TM_z^{2} + (1+r^2)M_z^{*}TM_z = r^2T$.
\item There exists unique $X, Y \in \clb(H^2(\A))$ such that $X+Y=T$ and
\[
M_z^* X M_z = X \text{ and } M_z^* Y M_z = r^2 Y.
\]
\end{enumerate}
\end{theorem}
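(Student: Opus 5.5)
The plan is to reduce everything to the superoperator $\Phi(A) = M_z^* A M_z$ on $\clb(H^2(\A))$. We will show (1)$\Rightarrow$(3)$\Rightarrow$(2)$\Rightarrow$(3) by algebra, and then (3)$\Rightarrow$(1) by a Brown--Halmos type construction.

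\textbf{The algebraic implications.} Condition (2) reads $-(\Phi-1)(\Phi-r^2)T = 0$.

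For (1)$\Rightarrow$(3), write $\varphi = \varphi\chi_{\bT} + \varphi\chi_{r\bT}$ and set $X = T_{\varphi\chi_{\bT}}$, $Y = T_{\varphi\chi_{r\bT}}$. Since $M_z$ is the restriction of multiplication by $z$ on $L^2(\partial\A)$, we get $M_z^* T_\psi M_z = T_{|z|^2\psi}$. Because $|z|^2 = 1$ on $\bT$ and $|z|^2 = r^2$ on $r\bT$, this gives $\Phi(X) = X$ and $\Phi(Y) = r^2Y$.

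Uniqueness in (3) follows from linearity. If $X + Y = 0$ with $\Phi(X) = X$ and $\Phi(Y) = r^2Y$, then $X = \Phi(X) = -\Phi(Y) = -r^2Y = r^2X$, so $(1-r^2)X = 0$ and hence $X = Y = 0$.

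For (3)$\Rightarrow$(2), apply $-\Phi^2 + (1+r^2)\Phi - r^2$ to $X$ and to $Y$. This gives $-1 + (1+r^2) - r^2 = 0$ and $-r^4 + (1+r^2)r^2 - r^2 = 0$.

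For (2)$\Rightarrow$(3), put
\[
X = \frac{\Phi(T) - r^2T}{1-r^2}, \qquad Y = \frac{T - \Phi(T)}{1-r^2}.
\]
Then $X + Y = T$, and (2) gives exactly $(\Phi - 1)X = 0$ and $(\Phi - r^2)Y = 0$.

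\textbf{The analytic step (3)$\Rightarrow$(1).} It remains to show that $\Phi(X) = X$ forces $X = T_\psi$ with $\psi$ supported on $\bT$, and that $\Phi(Y) = r^2Y$ forces $Y = T_\eta$ with $\eta$ supported on $r\bT$. Then $T = T_{\psi+\eta}$.

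For $X$, define a form on vectors $z^{-m}f$ and $z^{-n}g$, with $f, g \in H^2(\A)$, by
\[
B(z^{-m}f, z^{-n}g) = \langle X z^{k-m} f, z^{k-n} g\rangle, \qquad k \ge m, n.
\]
This is independent of $k$ because $\Phi(X) = X$. It gives
\[
|B| \le \|X\|\,\|z^{k-m}f\|\,\|z^{k-n}g\|.
\]
Letting $k \to \infty$, the $r\bT$-parts of these norms decay like $r^{k}$, since $|z| = r$ there. So
\[
|B(z^{-m}f, z^{-n}g)| \le \|X\|\,\|\chi_{\bT}z^{-m}f\|\,\|\chi_{\bT}z^{-n}g\|.
\]
Hence $B$ depends only on the restrictions to $\bT$ (it must also be checked that it is well defined on that level). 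It therefore extends to a bounded form on $L^2(\bT)$, because the restrictions of $\bigcup_m z^{-m}H^2(\A)$ to $\bT$ contain all monomials $z^j$ and are dense.

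The associated operator $A$ on $L^2(\bT)$ satisfies $\langle Azu, zv\rangle = \langle Au, v\rangle$ by construction. Thus $A$ commutes with the bilateral shift, so $A$ is multiplication by some $\psi \in L^\infty(\bT)$. Taking $m = n = 0$ gives
\[
\langle Xf, g\rangle = \langle \psi\chi_{\bT} f, g\rangle,
\]
that is, $X = T_{\psi\chi_{\bT}}$.

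For $Y$ we repeat the argument with the form
\[
r^{-2k}\langle Y z^{k-m} f, z^{k-n} g\rangle.
\]
Here the $\bT$-contributions of $r^{-k}z^{k}f$ vanish in the limit, and we land in $L^2(r\bT)$.

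\textbf{Main obstacle.} I expect the hardest point to be making this limiting form rigorous. One must show that it is well defined on restrictions to a single circle: if $\chi_{\bT}z^{-m}f = \chi_{\bT}z^{-n}g$, the values must agree. This should follow from the bound above, since the bound controls $B$ by the $\bT$-norms alone. One must also verify the density claims. If needed, these can be handled by passing to WOT-limit points of $N^{*k}XPN^k$, where $N$ is the normal extension of $M_z$ on $L^2(\partial\A)$ and $P = P_{H^2(\partial\A)}$.
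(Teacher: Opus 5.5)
Your algebraic implications are correct and match the paper; your linear uniqueness argument is the paper's computation in compressed form. Your treatment of $X$ in (3)$\Rightarrow$(1) also works. The $Y$ step, however, fails as written. The identity $\langle Yu,v\rangle = r^{-2k}\langle Yz^ku,z^kv\rangle$ holds, but the bound it gives is
\[
|\langle Yu,v\rangle| \le \|Y\|\,\|r^{-k}z^ku\|\,\|r^{-k}z^kv\|, \qquad \|r^{-k}z^ku\|^2 = r^{-2k}\|u\|_1^2+\|u\|_r^2 .
\]
This tends to $\infty$ as $k\to\infty$ unless $u=0$. On $\bT$ the factor $r^{-k}z^k$ has modulus $r^{-k}$, so the $\bT$-contribution explodes rather than vanishes.

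You must run the powers the other way, which is legitimate because $M_z$ is invertible on $H^2(\A)$. The condition $M_z^*YM_z=r^2Y$ is equivalent to $S_2^*YS_2=Y$ with $S_2=rM_z^{-1}$. Since $\|S_2^ku\|^2=r^{2k}\|u\|_1^2+\|u\|_r^2\to\|u\|_r^2$, you get $|\langle Yu,v\rangle|\le\|Y\|\,\|u\|_r\|v\|_r$. On $L^2(r\bT)$ the unitary to use is multiplication by $z/r$, since multiplication by $z$ there has norm $r$. The form is invariant under it because $r^{-2}\langle YM_zu,M_zv\rangle=\langle Yu,v\rangle$, and the rest of your argument then goes through.

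The same invertibility removes your ``main obstacle''. The vector $z^{-m}f$ already lies in $H^2(\A)$, so your $B$ is simply $\langle X\cdot,\cdot\rangle$ on $H^2(\A)$. Restriction to either circle is injective on $H^2(\A)$: it sends $\sum a_nz^n$ to the Fourier series with coefficients $a_n$ (resp.\ $a_nr^n$). Its image contains all trigonometric polynomials. So well-definedness and density are immediate, and no WOT-limit argument is needed.

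With this correction, your proof of (3)$\Rightarrow$(1) is valid and genuinely different from the paper's. The paper transfers the problem to the disc. The embeddings $G_1h=h(z)$ and $G_2h=h(r/z)$ of $H^2(\bD)$ into $H^2(\A)$ intertwine the unilateral shift with $S_1=M_z$ and $S_2=rM_z^{-1}$, so Brown--Halmos makes $G_1^*XG_1$ and $G_2^*YG_2$ Toeplitz on $H^2(\bD)$. The paper then identifies $X$ and $Y$ by matching matrix entries on the monomials. This uses invariance under $S_i^{*m}(\cdot)S_i^m$ to shift indices into $\bZ_+$, the coefficient formula $\langle T_\varphi z^k,z^j\rangle=\widehat{\varphi}_{\bT}(j-k)+r^{j+k}\widehat{\varphi}_{r\bT}(j-k)$, and a reflection $e^{i\theta}\mapsto e^{-i\theta}$ of the symbol on the inner circle.

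You instead extract from the invariance the estimate $|\langle Xu,v\rangle|\le\|X\|\,\|u\|_1\|v\|_1$, which shows directly that $X$ only sees the outer circle. You then use the commutant of the bilateral shift on $L^2(\bT)$ in place of Brown--Halmos. Your route is more intrinsic and avoids both the auxiliary embeddings and the coefficient bookkeeping. The paper's route is more hands-on and reuses the disc theorem verbatim, at the price of the index-shifting computation.
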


The above result is in the spirit of Brown and Halmos’s algebraic characterization of Toeplitz operators on the Hardy space of the unit disc (see \cite[Theorem 6]{BH1964}, also see \eqref{eqn-BH TO}).

As a by-product of the above, we characterize compact perturbations of Toeplitz operators (see Theorem \ref{thm: ATO}): Let $S_1 = M_z$ and $S_2 = r M_z^{-1}$. An operator $T \in \clb(H^2(\A))$ is a compact perturbation of a Toeplitz operator if and only if there exist $J_1, J_2 \in \clb(H^2(\A))$ such that
\[
S_i^{*m}T S_i^m \longrightarrow J_i \mbox{ as } m \rightarrow \infty,
\]
in norm for all $i = 1, 2$, and
\[
S_p^{*m} \bigl(T - J_1 - J_2 \bigr) S_q^m \longrightarrow 0 \mbox{ as } m \rightarrow \infty,
\]
in norm for all $p, q=1,2$.

The corresponding result for the Hardy space on the disc is due to Feintuch \cite[Theorem 4.1]{F1988}. The case of the Hardy space on the polydisc was established in \cite{MSS2018}; see also the remark at the end of Section \ref{sec: CO}.

The remainder of the paper is organized as follows. Section \ref{sec: TO} establishes the main result, namely, characterizations of Toeplitz operators. Section \ref{sec: CO} characterizes compact operators in terms of shift operators. In Section \ref{sec: ATO}, the final section of the paper, we apply these characterizations to describe compact perturbations of Toeplitz operators.

\section{Toeplitz operators}\label{sec: TO}

We begin with some basic background on the Hardy space of the annulus. For more details, we refer the reader to Sarason’s seminal work \cite{S1965}. The brief overview below follows the recent article \cite{DN2025}.

First, we recall that $L^2(\partial \A)$ is the space of all square-integrable, scalar-valued functions on $\partial \A$ with respect to the measure $\sigma$, defined for every measurable set $E\subseteq \partial \A$ by
\[
\sigma (E)=\frac{1}{2\pi}\Big(\big(m\{a\in[0, 2\pi): e^{ia}\in E\} \big )+\big(m\{b\in[0, 2\pi): re^{ib}\in E\} \big )\Big),
\]
where $m$ denotes the Lebesgue measure on $\mathbb{R}$. In view of $\partial \A = \mathbb T \sqcup r\mathbb T$, the inner product on $L^2(\partial \A)$ is given by:
\[
\langle f,g\rangle_{L^2(\partial \A)} = \int_{\partial\A} f(z)\overline{g(z)}\,d\sigma(z)\notag = \frac{1}{2\pi}\int_{0}^{2\pi}f(e^{it})\overline{g(e^{it})} d m(t) + \frac{1}{2\pi}\int_{0}^{2\pi}f(r e^{it})\overline{g(r e^{it})} d m(t),
\]
for all $f, g\in L^2(\partial \A)$. In particular,
\[
\|f\|_{L^2(\partial \A)}^2 = \int_{\partial\A} |f(z)|^2d\sigma(z)=\frac{1}{2\pi}\int_{0}^{2\pi}|f(e^{it})|^2 dm(t) + \frac{1}{2\pi}\int_{0}^{2\pi}|f(re^{it})|^2 d m(t),
\]
for all $f \in L^2(\partial \A)$. In view of this, for each $h\in H^2(\A)$, we have
\[
\|h\|_{H^2(\A)}^2 = \|h\|_1^2+\|h\|_r^2,
\]
where
\[
\|h\|_1^2 := \frac{1}{2\pi} \int_0^{2\pi}|h(e^{it})|^2 d m(t), \mbox{ and } \|h\|_r^2
:=
\frac{1}{2\pi} \int_0^{2\pi}|h(re^{it})|^2 d m(t).
\]

As noted in the introduction, we identify $H^2(\bA)$ with $H^2(\partial \A)$, the closure in $L^2(\partial\A)$ of the rational functions on $\bC$ with no poles in $\overline{\A}$. This identification is an onto isometry. We will use the two notations interchangeably. We also use the following orthonormal basis for $H^2(\partial \A)$, given by $\{e_n(z)\}_{n\in\mathbb{Z}}$ (cf. \cite{H2020}):
\begin{equation}\label{BasisofhARDY}
e_n(z)=\frac{1}{\sqrt{1+r^{2n}}}z^n \quad (n \in \bZ).
\end{equation}

We now turn to our main goal and prove the first result. Before doing so, we introduce a few more pieces of notation. As noted earlier, $M_z$ denotes the multiplication operator by the coordinate function $z$ on $H^2(\A)$. Recall that (see \cite[page 47]{S1965})
\[
\text{spectrum of }M_z = \overline{\A},
\]
so, in particular, $M_z$ is invertible on $H^2(\A)$:
\[
M_z^{-1} \in \clb(H^2(\A)).
\]
It is convenient and instructive to introduce notation for the shift operator on $H^2(\bA)$ and its scaled inverse. We denote them by $S_1$ and $S_2$, respectively:
\begin{equation}\label{eqn-S1 Sr}
S_1 = M_z \text{ and } S_2 = r M_z^{-1}.
\end{equation}
Clearly,
\[
S_1 S_2 = r I_{H^2(\partial \A)}.
\]

Alongside the Hardy space on the annulus, we also consider the Hardy space on the unit disc $\bD$, denoted by $H^2(\bD)$. Using boundary values, we identify $H^2(\bD)$ with the closed subspace of $L^2(\bT)$ consisting of functions whose negative Fourier coefficients vanish. For each symbol $\varphi \in L^\infty(\bT)$, the \textit{Toeplitz operator} $T_{\varphi}^{H^2(\bD)}$ on $H^2(\bD)$ is defined by
\[
T_{\varphi}^{H^2(\bD)} h = P_{H^2(\bD)}(\varphi h) \qquad (h \in H^2(\bD)),
\]
where $P_{H^2(\bD)}$ denotes the orthogonal projection of $L^2(\bT)$ onto $H^2(\bD)$. The Toeplitz operator on $H^2(\bD)$ with symbol $\varphi(z) = z$, $z \in \bD$, is the well-known \textit{shift} operator, which we denote by $S$. Therefore,
\[
S h = T_{z}^{H^2(\bD)} h = z h,
\]
for all $h \in H^2(\bD)$. We recall the classical result of Brown and Halmos \cite[Theorem 6]{BH1964}: $T \in \clb(H^2(\bD))$ is a Toeplitz operator if and only if
\begin{equation}\label{eqn-BH TO}
S^*TS=T.
\end{equation}

We now connect the Hardy spaces on the disc and the annulus, together with the operators $S$, $S_1$, and $S_2$, through the following elementary lemma. This lemma is central to our characterization of Toeplitz operators on the annulus and may also be of independent interest.

\begin{lemma}\label{embeddings}
Define $G_1, G_2 : H^2(\bD) \to H^2(\A)$ by
\[
(G_1 h)(z):= h(z),
\]
and
\[
(G_2 h)(z):= h\left(\frac{r}{z}\right),
\]
for all $h \in H^2(\bD)$ and $z \in \bA$. Then the following hold:
\begin{enumerate}
\item $G_1$ and $G_2$ are left-invertible operators with $\|G_1\| = \|G_2\| = \sqrt{2}$.
\item $G_1 S = S_1 G_1$.
\item $G_2 S = S_2 G_2$.
\end{enumerate}
\end{lemma}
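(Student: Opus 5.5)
Everything can be checked on the orthonormal basis $\{z^n\}_{n\ge 0}$ of $H^2(\bD)$ and the orthonormal basis \eqref{BasisofhARDY} of $H^2(\A)$. For $h=\sum_{n\ge0} a_n z^n \in H^2(\bD)$ we have, at least for polynomials $h$,
\[
G_1 h = \sum_{n\ge 0} a_n z^n = \sum_{n\ge0} a_n\sqrt{1+r^{2n}}\, e_n,
\qquad
G_2 h = \sum_{n\ge0} a_n r^n z^{-n} = \sum_{n\ge0} a_n r^n\sqrt{1+r^{-2n}}\, e_{-n} = \sum_{n\ge0} a_n \sqrt{1+r^{2n}}\, e_{-n}.
\]
The key computation is that both operators act diagonally, sending $z^n$ to a multiple of an orthonormal vector ($e_n$, respectively $e_{-n}$). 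The distinct $n$ give distinct, hence orthogonal, targets, and the weight is $\sqrt{1+r^{2n}}$ in both cases.

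\textbf{Step 1: boundedness, the norm, and left-invertibility.} From the display,
\[
\|G_i h\|^2=\sum_{n\ge0}|a_n|^2(1+r^{2n}) \qquad (i=1,2).
\]
Since $1\le 1+r^{2n}\le 2$, we obtain $\|h\|\le\|G_ih\|\le\sqrt2\,\|h\|$ on polynomials. Hence $G_i$ extends boundedly to all of $H^2(\bD)$, and this extension agrees with the pointwise definitions, because norm convergence in $H^2(\A)$ implies locally uniform convergence on $\A$. The lower bound shows that $G_i$ is bounded below and has closed range, so it is left-invertible; an explicit left inverse is $e_{\pm n}\mapsto z^n/\sqrt{1+r^{2n}}$, extended by $0$ on the orthogonal complement of the range. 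For the norm, $\|G_i 1\|^2 = 2 = 2\|1\|^2$, attained at $n=0$, so together with the upper bound we get $\|G_i\|=\sqrt2$.

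\textbf{Step 2: the intertwining relations.} It suffices to check these on monomials and then extend by continuity. For the first, $G_1Sz^n = z^{n+1} = M_z G_1 z^n$. For the second,
\[
G_2 S z^n = \left(\frac{r}{z}\right)^{n+1}, \qquad S_2G_2z^n = rM_z^{-1}\left(\frac{r}{z}\right)^{n} = \left(\frac{r}{z}\right)^{n+1},
\]
so the two sides agree.

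The only point requiring care is the justification, in Step 1, that the series manipulation and the pointwise definition of $G_2$ are compatible. Since $r/z\in\bD$ for $z\in\A$, the function $h(r/z)$ is holomorphic on $\A$. Its $H^2(\A)$ norm equals the series value by the boundedness argument combined with locally uniform convergence of partial sums. I do not expect any genuine obstacle beyond this.
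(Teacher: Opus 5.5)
Your proposal is correct and follows essentially the same route as the paper. Both arguments compute $\|G_i h\|^2=\sum_{n\ge0}(1+r^{2n})|c_n|^2$ from the orthogonality of monomials in $H^2(\A)$, read off $\|h\|\le\|G_ih\|\le\sqrt2\,\|h\|$ with equality at constants, and then verify the intertwinings directly. The differences are cosmetic: you check the intertwinings on monomials and extend by continuity, whereas the paper checks them pointwise for general $h$. You also state explicitly the lower bound $1+r^{2n}\ge 1$ and the compatibility of the series with the pointwise definition, which the paper leaves implicit.
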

\begin{proof}
For any $h(z) = \sum_{n=0}^\infty c_n z^n \in H^2(\bD)$, we have
\[
(G_1 h)(z) = \sum_{n=0}^\infty c_n z^n,
\]
and
\[
(G_2 h)(z) = \sum_{n=0}^\infty c_n r^n z^{-n}.
\]
Using the orthogonality of $\{z^n\}_{n \in \bZ}$ in $H^2(\bA)$ and the fact that (see \eqref{BasisofhARDY})
\[
\|r^n z^{-n}\|_{H^2(\bA)}^2 = r^{2n}(1 + r^{-2n}) = 1 + r^{2n},
\]
for all $n \in \bZ_+$, we obtain
\begin{align*}
\|G_1 h\|_{H^2(\bA)}^2 = \|G_2 h\|_{H^2(\A)}^2 = \sum_{n=0}^\infty (1 + r^{2n}) |c_n|^2 \le 2 \sum_{n=0}^\infty |c_n|^2 = 2 \|h\|_{H^2(\bD)}^2,
\end{align*}
with equality when $h$ is a nonzero constant function. Thus, $\|G_1\| = \|G_2\| = \sqrt{2}$. This estimate also shows that $G_1$ and $G_2$ are bounded below and therefore left-invertible. Moreover, for any $h \in H^2(\bD)$ and $z \in \A$, we have
\[
(S_1 G_1 h)(z) = z h(z) = (S h)(z) = (G_1 S h)(z),
\]
which yields $G_1 S = S_1 G_1$. Similarly,
\[
(S_2 G_2 h)(z) = \frac{r}{z} h\left(\frac{r}{z}\right) = (S h)\left(\frac{r}{z}\right) = (G_2 S h)(z),
\]
so $G_2 S = S_2 G_2$.
\end{proof}

We also need to analyze $L^\infty(\partial \A)$, the space of essentially bounded measurable functions of $\partial \A$ with respect to the measure $\sigma$. As noted in the introduction, $\partial \A =\bT \sqcup r \bT$. Moreover,
\[
\sigma(E) = \frac{1}{2\pi}(m(E \cap \bT) + m(E \cap r \bT)),
\]
for all $\sigma$-measurable set $E \subseteq \partial \A$. Recall that $E$ is $\sigma$-measurable if and only if $E \cap \bT$ and $E\cap r \bT$ are Lebesgue measurable. It follows that for a set $N \subseteq \partial \mathbb A$,
\[
\sigma(N) = 0 \iff m(N \cap \bT) = 0 \quad \text{and} \quad m(N \cap r\bT) = 0.
\]
In view of this, $L^\infty(\partial \A)$ admits a natural decomposition. Consider the Banach space $L^\infty(\bT, m) \oplus L^\infty(r \bT, m)$, which we abbreviate as
\[
L^\infty(\bT) \oplus L^\infty(r \bT),
\]
equipped with the norm
\[
\|(g, h)\|_\infty = \max\{\|g\|_{L^\infty(\bT)}, \|h\|_{L^\infty(r \bT)}\}.
\]
It is routine to verify that the map
\[
\Psi(\varphi) = \left(\varphi|_{\mathbb{T}}, \varphi|_{r \bT}\right) \qquad (\varphi \in L^\infty(\partial \A)),
\]
defines an isometric isomorphism from $L^\infty(\partial \bA)$ onto $L^\infty(\bT) \oplus L^\infty(r \bT)$. For instance, to see that $\Psi$ is surjective, pick $(g, h) \in L^\infty(\mathbb{T}) \oplus L^\infty(r \bT)$ and define $\varphi : \partial \A \to \bC$ by
\[
\varphi = g \chi_{\bT} + h \chi_{r \bT}.
\]
It follows that $\varphi \in L^\infty(\partial \bA)$ and $\Psi(\varphi) = (g, h)$.

In view of the above, we will often identify elements of $L^\infty(\partial \bA)$ with elements of $L^\infty(\bT) \oplus L^\infty(r \bT)$ via the isometric isomorphism $\Psi$, without explicitly mentioning the identification.

The final ingredient in our characterization is a formula for the matrix coefficients of Toeplitz operators, expressed using the Hardy space inner product. Let $\varphi\in L^2(\partial \A)$. For each $n\in \mathbb{Z}$, define the pair of  $n$-th Fourier coefficients by
\[
\widehat{\varphi_\bT}(n)=\frac{1}{2\pi}\int_{0}^{2\pi}\varphi(e^{it})e^{-int}dm(t),
\]
and
\[
\widehat{\varphi_{r \bT}}(n) = \frac{1}{2\pi}\int_{0}^{2\pi}\varphi(re^{it})e^{-int}d m(t).
\]
If $\varphi \in L^\infty(\partial \A)$, then for every $j,k\in\mathbb Z$, we have (cf. \cite[page 988]{DN2025})
\begin{equation}\label{toeplitz-coefficientss}
\langle T_\varphi z^k,z^j\rangle = \widehat{\varphi}_{\bT}(j-k) + r^{j+k} \widehat{\varphi}_{r \bT}(j-k).
\end{equation}

We recall that $S_1 = M_z$ and $S_2 = r M_z^{-1}$, so
\[
S_1 S_2 = r I_{H^2(\A)}.
\]
Therefore, for $Y \in \clb(H^2(\A))$, the condition 
\[
S_2^* Y S_2 = Y,
\]
is equivalent to $S_1^* Y S_1 = r^2 Y$, which, since $S_1 = M_z$, is equivalent to
\begin{equation}\label{eqn MYM = r2Y}
M_z^* Y M_z = r^2 Y.
\end{equation}
We are now ready to prove the central result of this paper. 

\begin{theorem}\label{thm: TO}
Let $T \in \clb(H^2(\A))$. The following are equivalent:
\begin{enumerate}
\item $T$ is a Toeplitz operator.
\item $-S_1^{*2} T S_1^{2} + (1+r^2) S_1^{*} T S_1 = r^2T$.
\item There exists unique $X, Y \in \clb(H^2(\A))$ such that $X+Y=T$ and
\[
S_1^* X S_1 = X \text{ and } S_2^* Y S_2 = Y.
\]
\end{enumerate}
\end{theorem}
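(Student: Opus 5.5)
Everything will be done in terms of matrix coefficients with respect to the monomials $z^n$. For $T\in\clb(H^2(\A))$ write $t_{j,k}=\langle Tz^k,z^j\rangle$. Since $S_1^*TS_1$ has coefficients $\langle Tz^{k+1},z^{j+1}\rangle$, the conditions in the theorem become recursions along the diagonals $j-k=\text{const}$. The condition $S_1^*XS_1=X$ says $x_{j+1,k+1}=x_{j,k}$. The condition $S_2^*YS_2=Y$, equivalently \eqref{eqn MYM = r2Y}, says $y_{j+1,k+1}=r^2y_{j,k}$. Condition (2) says
\[
-t_{j+2,k+2}+(1+r^2)t_{j+1,k+1}-r^2t_{j,k}=0 .
\]
The characteristic polynomial of this recursion is $-\lambda^2+(1+r^2)\lambda-r^2=-(\lambda-1)(\lambda-r^2)$, with distinct roots $1$ and $r^2$.

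\textbf{(1)$\Rightarrow$(2).} By \eqref{toeplitz-coefficientss},
\[
t_{j,k}=\widehat{\varphi}_{\bT}(j-k)+(r^2)^{(j+k)/2}\widehat{\varphi}_{r\bT}(j-k).
\]
Along a fixed diagonal, shifting $(j,k)\mapsto(j+1,k+1)$ multiplies the second term by $r^2$ and leaves the first unchanged. Hence $t$ is a combination of solutions for the roots $1$ and $r^2$, and (2) holds on every pair $(z^k,z^j)$. Since both sides are bounded operators and the monomials span a dense subspace, (2) holds as an operator identity.

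\textbf{(2)$\Rightarrow$(3).} The operator identity (2) factors as $(\Phi-I)(\Phi-r^2I)T=0$, where $\Phi(A)=S_1^*AS_1$ is a bounded linear map on $\clb(H^2(\A))$. Because $1\neq r^2$, we can set
\[
X=\frac{\Phi(T)-r^2T}{1-r^2},\qquad Y=\frac{T-\Phi(T)}{1-r^2}.
\]
Then $X+Y=T$, and $(\Phi-I)X=0$ and $(\Phi-r^2)Y=0$ both follow from (2). For uniqueness, suppose $X'+Y'=0$ with $\Phi X'=X'$ and $\Phi Y'=r^2Y'$. Applying $\Phi$ gives $X'+r^2Y'=0$, so $(1-r^2)Y'=0$ and hence $X'=Y'=0$. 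Conversely, given (3), we get $(\Phi-I)(\Phi-r^2)(X+Y)=0$, which is (2), so (2) and (3) are equivalent.

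\textbf{(3)$\Rightarrow$(1).} This is the substantive step. From $x_{j+1,k+1}=x_{j,k}$ we get $x_{j,k}=a_{j-k}$. From the $Y$-recursion we get $y_{j,k}=r^{j+k}b_{j-k}$, where $b_n:=y_{n,0}r^{-n}$ for each $n$. We now seek $\varphi\in L^\infty(\partial\A)$ with $\widehat{\varphi}_{\bT}(n)=a_n$ and $\widehat{\varphi}_{r\bT}(n)=b_n$, since then $T=T_\varphi$ by \eqref{toeplitz-coefficientss} and density.

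To produce $\varphi$, we transfer to the disc using Lemma \ref{embeddings}. For $X$, the operator $A=G_1^*XG_1$ satisfies $S^*AS=G_1^*S_1^*XS_1G_1=A$. By \eqref{eqn-BH TO}, $A=T^{H^2(\bD)}_{\psi}$ for some $\psi\in L^\infty(\bT)$. Its coefficients are $\langle Xz^k,z^j\rangle=a_{j-k}$ for $j,k\ge0$, so $\widehat\psi(n)=a_n$ for all $n\in\bZ$ (each diagonal is reached with $j,k\ge0$). Similarly, $B=G_2^*YG_2$ satisfies $S^*BS=B$ by Lemma \ref{embeddings}(3), so $B=T^{H^2(\bD)}_\eta$. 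Its coefficients are
\[
\langle Yr^kz^{-k},r^jz^{-j}\rangle=r^{j+k}y_{-j,-k}=r^{j+k}r^{-j-k}b_{k-j}=b_{k-j},
\]
so $\widehat\eta(n)=b_{-n}$. Put $\varphi|_{\bT}=\psi$ and $\varphi|_{r\bT}(re^{it})=\eta(e^{-it})$; this is bounded and has the required Fourier coefficients. Finally, $X=T_{\varphi\chi_{\bT}}$ and $Y=T_{\varphi\chi_{r\bT}}$, and therefore $T=T_\varphi$.

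I expect the main obstacle to be the index bookkeeping in this last step. One must make sure that a Toeplitz operator on $H^2(\A)$ is determined by its coefficients against the monomials; this holds because the monomials have dense span and $T_\varphi$ is bounded. One must also check that $\eta$ is well defined, which follows from the Brown–Halmos theorem applied on the disc.
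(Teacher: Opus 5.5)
Your proof is correct and follows essentially the same route as the paper: the same $X=\frac{1}{1-r^2}(S_1^*TS_1-r^2T)$ and $Y=\frac{1}{1-r^2}(T-S_1^*TS_1)$, and the same transfer to the disc through $G_1^*XG_1$, $G_2^*YG_2$ and the Brown--Halmos theorem, with the symbol on $r\bT$ obtained by reflecting $\eta$. The differences are cosmetic: you check (1)$\Rightarrow$(2) with the coefficient formula instead of the identity $-|z|^4+(1+r^2)|z|^2-r^2=0$ on $\partial\A$, and you solve the diagonal recursions in closed form instead of iterating $S_i^{*m}(\cdot)S_i^m$ to reach nonnegative indices; your index bookkeeping (including $\widehat\eta(n)=b_{-n}$) checks out.
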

\begin{proof}
Suppose first that $T=T_\varphi$ for some $\varphi\in L^\infty(\partial\A)$. For each $f,g\in H^2(\A)$, we have
\begin{align*}
&\left\langle\left(-M_z^{*2}T_\varphi M_z^2+(1+r^2)M_z^*T_\varphi M_z-r^2T_\varphi\right)f,g\right\rangle
\\
&\qquad \qquad = \int_{\partial\A}\varphi(z)\left[-|z|^4+(1+r^2)|z|^2-r^2\right]f(z)\overline{g(z)}\,d\sigma(z).
\end{align*}
Since $|z|=1$ for all $z \in \bT$ and $|z|=r$ for all $z \in r\bT$, we have
\[
-|z|^4+(1+r^2)|z|^2-r^2 = 0,
\]
on both $\bT$ and $r\bT$. Therefore, we conclude that
\begin{align*}
-S_1^{*2}T_\varphi S_1^2+(1+r^2) S_1^*T_\varphi S_1 = r^2T_\varphi.
\end{align*}
This proves $(1)\implies(2)$. To prove $(2)\implies (3)$, assume $- M_z^{*2} T M_z^{2} + (1+r^2) M_z^* T M_z = r^2T$. We note a general fact: if we define
\begin{equation}\label{eqn: X & Y}
X = \frac{1}{1 - r^2} (S_1^* T S_1 - r^2 T) \text{ and } Y = \frac{1}{1 - r^2} (T - S_1^* T S_1),
\end{equation}
then it is straightforward to verify that
\begin{equation}\label{eqn: X+Y=T}
X + Y = T.
\end{equation}
In addition, note that
\[
S_1^* X S_1 = M_z^* X M_z = \frac{1}{1-r^2}(M_z^{*2}TM_z^2-r^2M_z^*TM_z).
\]
Since $M_z^{*2} T M_z^{2} = (1+r^2) M_z^* T M_z - r^2T$, it follows that
\[
M_z^* X M_z = \frac{1}{1-r^2}((1+r^2)M_z^{*}TM_z-r^2T-r^2M_z^*TM_z)=X.
\]
Similarly,
\[
M_z^*YM_z = \frac{1}{1-r^2}(M_z^{*}TM_z-M_z^{*2}TM_z^2) = \frac{1}{1-r^2} (-(1 + r^2) M_z^* T M_z + r^2T + M_z^*TM_z).
\]
Thus, $M_z^*YM_z = r^2Y$, and hence, by \eqref{eqn MYM = r2Y}, we have $S_2^*YS_2=Y$. We postpone the proof that the representation of $T$ in \eqref{eqn: X+Y=T} is unique until the end of this section.

\NI Now we prove that $(3)\implies (1)$. In this part of the proof, we repeatedly use the notation and conclusions of Lemma \ref{embeddings}. We know that
\[
S_1^* X S_1 = X \qquad \text{and} \qquad S_2^* Y S_2 = Y.
\]
Define $A_1, A_2 \in \clb(H^2(\bD))$ by
\begin{align*}
A_1 := G_1^* X G_1, \qquad A_2 := G_2^* Y G_2.
\end{align*}
We claim that $A_1$ and $A_2$ are Toeplitz operators on $H^2(\bD)$. Indeed, since $S_1^* X S_1 = X$ and $S^* G_1^* = G_1^* S_1^*$ (see Lemma \ref{embeddings}), it follows that
\[
S^* A_1 S = S^* (G_1^* X G_1) S = (G_1^* S_1^*) X (S_1 G_1) = G_1^* (S_1^* X S_1) G_1 = G_1^* X G_1 = A_1,
\]
proving that $A_1$ is a Toeplitz operator. Similarly,
\[
S^*A_2 S = S^*(G_2^*YG_2)S=(G_2^*S_2^*)Y(S_2 G_2)=G_2^*(S_2^*YS_2)G_2=G_2^*YG_2=A_2,
\]
showing that $A_2$ is a Toeplitz operator. By Brown and Halmos \eqref{eqn-BH TO}, there exist unique symbols $\varphi_1, \widetilde\varphi \in L^\infty(\bT)$ such that
\[
A_1 = T_{\varphi_1}^{H^2(\bD)} \qquad \text{and} \qquad A_2 = T_{\widetilde\varphi}^{H^2(\bD)}.
\]
Define $\varphi_r \in L^\infty(r\bT)$ by
\[
\varphi_r(r e^{i\theta}) := \widetilde\varphi(e^{-i\theta}).
\]
Via the identification provided by the onto isometry $\Psi$, outlined preceding the statement of this theorem, we know that $(\varphi_1, 0)$ and $(0, \varphi_r)$ belong to $L^\infty(\partial \A)$. We claim that
\[
X = T_{(\varphi_1, 0)} \text{ and } Y = T_{(0, \varphi_r)},
\]
on $H^2(\A)$. To show this, it suffices to show that matrix coefficients of $X$ and $Y$ agree with those of the Toeplitz operators $T_{(\varphi_1, 0)}$ and $ T_{(0, \varphi_r)}$, respectively (see \eqref{toeplitz-coefficientss}). To this end, fix $k, n \in \bZ$. Choose an integer $m \ge 1$ large enough that
\[
k + m, n + m \ge 0.
\]
Then
\[
z^{k+m}, z^{n+m} \in H^2(\bD),
\]
and by definition of $G_1$,
\[
G_1(z^{k+m}) = z^{k+m} = S_1^m z^k,
\]
and
\[
G_1(z^{n+m}) = z^{n+m} = S_1^m z^n.
\]
Since $S_1^* X S_1 = X$, it follows by iteration that $S_1^{*m} X S_1^m = X$. Therefore,
\begin{align*}
\langle X z^k, z^n\rangle_{H^2(\bA)} & = \langle S_1^{*m} X S_1^m z^k, z^n\rangle_{H^2(\bA)}
\\
& = \langle X G_1 z^{k+m}, G_1 z^{n+m} \rangle_{H^2(\A)}
\\
& = \langle G_1^* X G_1 z^{k+m}, z^{n+m}\rangle_{H^2(\bD)}
\\
& = \langle T_{\varphi_1}^{H^2(\bD)} z^{k+m}, z^{n+m}\rangle_{H^2(\bD)}
\\
& = \widehat{\varphi_1}\big((n + m) - (k + m)\big)
\\
& = \widehat{\varphi_1}(n - k)
\\
& = \langle T_{(\varphi_1, 0)} z^k, z^n\rangle_{H^2(\bA)},
\end{align*}
where the last equality uses \eqref{toeplitz-coefficientss}. In the above, we also used the well-known fact that the matrix entries of the Toeplitz operator $T_{\psi}^{H^2(\bD)}$ on $H^2(\bD)$, for symbol $\psi \in L^\infty(\bT)$, with respect to the orthonormal basis $\{z^k\}_{k \in \bZ_+}$, are given by
\[
\widehat{\psi}(p - q) = \langle T_{\psi}^{H^2(\bD)} z^q, z^p\rangle_{H^2(\bD)},
\]
for all $p,q \in \bZ_+$. Since $\{z^t : t \in \bZ\}$ is an orthogonal basis for $H^2(\A)$, it follows that
\[
X = T_{(\varphi_1, 0)}.
\]
To prove that $Y = T_{(0, \varphi_r)}$, we again choose an integer $m$ large enough so that
\[
m - k, m - n \ge 0.
\]
Since $S_2 = r M_z^{-1}$, we have
\[
S_2^m z^k = r^m z^{k-m} = r^k \Big(\frac{r}{z}\Big)^{m-k} = r^k G_2(z^{m-k}),
\]
and similarly,
\[
S_2^m z^n = r^n G_2(z^{m-n}).
\]
Proceeding as before, $S_2^* Y S_2 = Y$ implies $S_2^{*m} Y S_2^m = Y$, and hence
\begin{align*}
\langle Y z^k,z^n\rangle_{H^2(\A)} & = \langle S_2^{*m} Y S_2^m z^k,z^n\rangle_{H^2(\A)} \notag
\\
& = r^{n+k} \langle Y G_2 z^{m-k}, G_2 z^{m-n} \rangle_{H^2(\A)}
\\
& = r^{n+k} \langle G_2^* Y G_2 z^{m-k}, z^{m-n}\rangle_{H^2(\bD)}
\\
&= r^{n+k} \langle T_\psi^{H^2(\bD)} z^{m-k}, z^{m-n}\rangle_{H^2(\bD)}
\\
&= r^{n+k} \widehat{\psi}\big((m - n) - (m - k)\big)
\\
&= r^{n+k} \widehat{\psi}(k - n)
\\
& = r^{n+k} \widehat{\varphi_r}(n - k)
\\
& = \langle T_{(0, \varphi_r)} z^k, z^n\rangle_{H^2(\bA)}.
\end{align*}
Similarly, we conclude that
\[
Y = T_{(0, \varphi_r)}.
\]
We know that $X + Y = T$. Since
\[
\varphi:=(\varphi_1, \varphi_r) = (\varphi_1, 0) + (0, \varphi_r) \in L^\infty(\partial \A),
\]
and
\[
X + Y = T_{(\varphi_1, 0)} + T_{(0, \varphi_r)} = T_{\varphi},
\]
we conclude that $T = T_{\varphi}$ is a Toeplitz operator, completing the proof that $\textup{(3)}$ implies $\textup{(1)}$.
\end{proof}

We now prove that the representation of $T$ in part $(3)$ of Theorem \ref{thm: TO} is unique. More specifically, we want to prove that the representation $T = X + Y$ in \eqref{eqn: X+Y=T} is unique, where $X$ and $Y$ are given by (see \eqref{eqn: X & Y})
\[
X = \frac{1}{1 - r^2} (S_1^* T S_1 - r^2 T) \text{ and } Y = \frac{1}{1 - r^2} (T - S_1^* T S_1).
\]
Suppose
\[
T = \widetilde X + \widetilde Y,
\]
where $\widetilde X, \widetilde Y \in \clb(H^2(\A))$ satisfy
\[
S_1^* \widetilde X S_1 = \widetilde X \text{ and } S_2^* \widetilde Y S_2 = \widetilde Y.
\]
Now $T = \widetilde X + \widetilde Y$ implies $S_1^* T S_1 = \widetilde X + S_1^* \widetilde Y S_1$. Since $S_1^* \widetilde Y S_1 = r^2 \widetilde{Y}$, it follows that
\[
S_1^* T S_1 - r^2 T = \widetilde X + S_1^* \widetilde Y S_1 - r^2 \widetilde{X} - r^2 \widetilde{Y} = (1- r^2) \widetilde{X},
\]
which yields $\widetilde X = \frac{1}{1 - r^2} (S_1^* T S_1 - r^2 T)$. Since $\widetilde Y = T - \widetilde X$, we finally conclude that
\[
\widetilde X = \frac{1}{1 - r^2} (S_1^* T S_1 - r^2 T) \text{ and } \widetilde Y = \frac{1}{1 - r^2} (T - S_1^* T S_1).
\]
This completes the uniqueness part of the proof of $(2) \implies (3)$, which was previously left to be established.

\section{Compact operators}\label{sec: CO}

Compact operators constitute one of the most important classes of operators in infinite-dimensional analysis. They are often regarded as “small” operators and provide an important link between finite- and infinite-dimensional theory. There are several characterizations of compactness; here, we focus on one involving the shift operator. This problem for $H^2(\bD)$ was studied extensively by Barr\'ia and Halmos \cite{BH1982} and Feintuch \cite{F1988}. Here, we continue this line of work on $H^2(\A)$.

We continue with the notation as in \eqref{eqn-S1 Sr}:
\[
S_1=M_z \mbox { and } S_2 = rM_z^{-1}.
\]
Since $e_n(z)=\frac{1}{\sqrt{1+r^{2n}}}z^n$ (see \eqref{BasisofhARDY}), we have
\[
M_ze_n=\sqrt{\frac{1+r^{2n+2}}{1+r^{2n}}} e_{n+1} \qquad (n \in \bZ).
\]
Also, recall from the beginning of the previous section that $\|h\|^2_{H^2(\A)} = \|h\|_1^2 + \|h\|_r^2$ for all $h \in H^2(\A)$. Therefore,
\begin{align*}
\|S_1 h\|^2 = \|M_z h\|^2 = \|h\|_1^2 + r^2 \|h\|_r^2 \leq \|h\|^2,
\end{align*}
and
\begin{align*}
\|S_2h\|^2 = \|rM_z^{-1}h\|^2 = \|rM_{z^{-1}}h\|^2 = r^2\|h\|_{1}^2 + \|h\|_{r}^2 \leq\|h\|^2,
\end{align*}
for all $h \in H^2(\A)$ (note that $M_z^{-1} = M_{z^{-1}}$ on $H^2(\A)$). In particular, $S_1$ and $S_2$ are contractions. On the other hand, since $S_1^{m}e_n = M_z^{m}e_n$, a straightforward computation gives
\begin{equation}\label{eqn: S_1men}
S_1^{m}e_n = \sqrt{\frac{1+r^{2(n+m)}}{1+r^{2n}}} e_{n+m} \text{ and } S_1^{*m}e_n = \sqrt{\frac{1+r^{2n}}{1+r^{2(n-m)}}} e_{n-m},
\end{equation}
for all $n \in \bZ$ and $m\geq1$. In particular, for each fixed $n$,
\begin{align*}
\|S_1^{*m}e_n\|^2=\frac{1+r^{2n}}{1+r^{2(n-m)}} \rightarrow 0 \mbox { as } m \rightarrow \infty.
\end{align*}
Since $S_1$ is a contraction, the operators $S_1^{*m}$ are uniformly bounded. Thus, by density of the linear span of $\{e_n:n\in\mathbb Z\}$, it follows that $S_1^{*m}\to 0$ in the strong operator topology. Similarly, for $m \geq 1$, since $S_2^me_n=(rM_z^{-1})^{m}e_n$, we have
\begin{equation}\label{eqn: S_2men}
S_2^{m}e_n = r^m \sqrt{\frac{1+r^{2(n - m)}}{1+r^{2n}}} e_{n-m} \text{ and } S_2^{*m}e_n = r^m\sqrt{\frac{1+r^{2n}}{1+r^{2(n+m)}}} e_{n+m},
\end{equation}
for all $n \in \bZ$ and $m\geq1$. We conclude that
\begin{align*}
\|S_2^{*m}e_n\|^2=r^{2m}\frac{1+r^{2n}}{1+r^{2(n+m)}} \rightarrow0 \mbox{ as }m \rightarrow\infty,
\end{align*}
for each $n \in \bZ$, and hence, $S_2^{*m}\to 0$ in the strong operator topology.

We are now ready to present a characterization of compact operators on $H^2(\A)$.

\begin{theorem} \label{compathm}
Let $T \in \clb(H^2(\A))$. Then $T$ is compact if and only if
\[
S_i^{*m}TS_j^m \rightarrow 0,
\]
in norm as $m \rightarrow \infty$ for all $i,j=1,2$.
\end{theorem}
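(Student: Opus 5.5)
The forward direction is the standard one. Suppose $T$ is compact. We have already shown that $S_1^{*m}\to 0$ and $S_2^{*m}\to 0$ in the strong operator topology. For a compact $T$, the sequence $S_i^{*m}T$ then converges to $0$ in norm. To see this, approximate $T$ in norm by a finite-rank operator $F$. Then $S_i^{*m}F\to 0$ in norm, because $F$ has finite-dimensional range and strong convergence is uniform on compact sets. Since $\|S_i^{*m}\|\le 1$, we get $\|S_i^{*m}T\|\le \|T-F\|+\|S_i^{*m}F\|$. Finally, $\|S_j^m\|\le 1$ because $S_1,S_2$ are contractions, so $\|S_i^{*m}TS_j^m\|\le \|S_i^{*m}T\|\to 0$ for all $i,j$.

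For the converse, I will split $H^2(\A)$ along the basis $\{e_n\}$. Let $P_+$ be the orthogonal projection onto $\overline{\mathrm{span}}\{e_n:n\ge 0\}$ and $P_-=I-P_+$. Write $T=\sum_{p,q\in\{+,-\}}P_pTP_q$. It suffices to show that each block is compact. The idea is that on the "$+$" part $S_1^m$ acts almost isometrically, and on the "$-$" part $S_2^m$ acts almost isometrically up to the factor $r^m$ combined with the weights. Concretely, by \eqref{eqn: S_1men}, for $n\ge 0$ we have $S_1^m e_n = c_{n,m}e_{n+m}$ with $c_{n,m}^2=(1+r^{2(n+m)})/(1+r^{2n})\in[1/2,1]$. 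By \eqref{eqn: S_2men}, for $n\le 0$ we have $S_2^m e_n=d_{n,m}e_{n-m}$ with $d_{n,m}^2=r^{2m}(1+r^{2(n-m)})/(1+r^{2n})=(r^{2m}+r^{2n})/(1+r^{2n})$. Since $r^{2n}\ge 1$ for $n\le 0$, this quantity also lies in $[1/2,1]$. Let $Q_m$ be the projection onto $\mathrm{span}\{e_n:|n|<m\}$. I aim to show $\|(I-Q_{m})T(I-Q_{m})\|\to 0$, or a suitable variant. Together with the finite-rank pieces $Q_mT$ and $T Q_m$, this is not quite enough, since $Q_mT(I-Q_m)$ is not finite rank a priori. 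So I will instead show that $\|(I-Q_m)T\|\to 0$ directly.

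The key step is to write the restriction of $S_1^m$ to $\mathrm{ran}\,P_+$ as $W_1D_1$. Here $D_1$ is the diagonal operator $e_n\mapsto c_{n,m}e_n$, which is invertible with $\|D_1^{-1}\|\le\sqrt2$, and $W_1$ is an isometry from $\mathrm{ran}\,P_+$ onto $\overline{\mathrm{span}}\{e_n:n\ge m\}$. Similarly, $S_2^m|_{\mathrm{ran}\,P_-}=W_2D_2$, with $W_2$ an isometry onto $\overline{\mathrm{span}}\{e_n:n\le -m\}$. Then the block of $T$ sandwiched between the projections onto $\{n\ge m\}$ or $\{n\le -m\}$ equals $W_iD_i^{-*}\,(P_{\pm}S_i^{*m}TS_j^mP_{\pm})\,D_j^{-1}W_j^*$. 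Its norm is at most $2\|S_i^{*m}TS_j^m\|\to 0$. Summing the four combinations gives $\|(I-Q_m)T(I-Q_m)\|\to 0$.

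To obtain compactness of $T$ itself, I would next try to handle the cross terms $Q_mT(I-Q_m)$. The hypothesis by itself does not seem to control these directly. I therefore suspect that the intended hypothesis also uses $S_i^{*m}$ against the identity in a different way, or that one must argue as in Feintuch. Apply the previous estimate to $T^*T$ after noting that $S_i^{*m}T^*TS_j^m$ is not obviously controlled... This step is the main obstacle. What I would try first is to fix $N$ and observe that $\|Q_NT(I-Q_m)\|$ for $m\gg N$ is small. The columns satisfy $TS_j^me_n=T(\cdot)$, and pairing with fixed $e_k$ gives $\langle T S_j^m e_n,e_k\rangle$. This tends to $0$ only weakly, so an additional argument (for instance, reducing to $T^*T$ via an analogous hypothesis) would be needed. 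I would check whether the hypothesis with all four $i,j$ combined with a diagonal-shift averaging argument closes this gap.
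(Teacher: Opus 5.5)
Your forward direction is correct and is the paper's argument; you simply prove the cited fact $\|S_i^{*m}T\|\to 0$ by finite-rank approximation. The converse, however, is left unfinished, and the obstacle you stop at does not exist.

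$Q_m$ is the projection onto the $(2m-1)$-dimensional space $\mathrm{span}\{e_n:|n|<m\}$. So every operator of the form $Q_mX$ or $XQ_m$ has finite rank. In particular $Q_mT(I-Q_m)=(Q_mT)(I-Q_m)$ is finite rank, as you yourself noted for $Q_mT$. Hence
\[
T-(I-Q_m)T(I-Q_m)=Q_mT+(I-Q_m)TQ_m
\]
is finite rank. The estimate you already derived, $\|(I-Q_m)T(I-Q_m)\|\le 2\sum_{i,j}\|S_i^{*m}TS_j^m\|\to 0$, then shows directly that $T$ is a norm limit of finite-rank operators, hence compact. The detour through $\|(I-Q_m)T\|\to 0$, $T^*T$, or averaging is unnecessary, and your suspicion that the hypothesis is too weak is unfounded.

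One small correction is needed in the key step. With $P_-=I-P_+$ the projection onto $\overline{\mathrm{span}}\{e_n:n\le -1\}$, the operator $S_2^m$ maps $\mathrm{ran}\,P_-$ onto $\overline{\mathrm{span}}\{e_n:n\le -m-1\}$, not onto $\{n\le -m\}$. Restrict $S_2^m$ to $\overline{\mathrm{span}}\{e_n:n\le 0\}$ instead; your bound $d_{n,m}^2\in[1/2,1]$ was computed for $n\le 0$ anyway. The two half-spaces used for $S_1^m$ and $S_2^m$ need not be complementary.

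With that repaired, your argument is a valid proof that differs from the paper's. The paper sets $A_m=S_1^mS_1^{*m}$ and $B_m=S_2^mS_2^{*m}$, and uses the diagonal operator $F_m=(I-A_m)(I-B_m)$, which is compact but generally not finite rank. It then writes
\[
T-K_m=(I-F_m)T(I-F_m)=(A_m+B_m-A_mB_m)T(A_m+B_m-A_mB_m).
\]
Every term on the right visibly contains a factor $S_i^{*m}TS_j^m$, so no weights need to be inverted. Your route instead uses sharp finite-rank cutoffs. Its cost is the lower bound $c_{n,m},d_{n,m}\ge 1/\sqrt2$, needed to factor $S_i^m$ on a half-space as an isometry times an invertible diagonal. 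In return it yields finite-rank approximants with the explicit error bound above.
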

\begin{proof}
The necessity follows from a more general fact. Suppose that $T$ is compact. Since $S_1^{*m}\to 0$ and $S_2^{*m}\to 0$ in the strong operator topology as $m \to \infty$, it follows from \cite[1.3 (d), page 3]{BS1990} that
\[
\|S_i^{*m}T\|\longrightarrow0 \qquad (i=1,2),
\]
as $m \rightarrow \infty$. Since $S_j$ is a contraction for $j=1,2$, we have
\begin{align*}
\|S_i^{*m}TS_j^m\|\leq\|S_i^{*m}T\|\,\|S_j^m\|\longrightarrow 0,
\end{align*}
for every $i,j=1,2$ as $m \rightarrow \infty$. This proves the necessity of the condition in the theorem. Conversely, assume that $\|S_i^{*m}TS_j^m\|\longrightarrow0$ as $m \rightarrow \infty$ for $i,j=1,2$. For each $m\geq1$, define
\[
A_m=S_1^mS_1^{*m} \text{ and } B_m=S_2^mS_2^{*m}.
\]
Since $S_1$ and $S_2$ are contractions, $\|A_m\|\leq1$ and $\|B_m\|\leq1$ for every $m\geq 1$. Define
\[
F_m=(I-A_m)(I-B_m) \qquad (m \geq 1).
\]
We claim that $F_m$ is a compact diagonal operator for each $m \geq 1$. Indeed, \eqref{eqn: S_1men} and \eqref{eqn: S_2men} give
\begin{align*}
A_m e_n=\frac{1+r^{2n}}{1+r^{2(n-m)}}e_n\text{ and }  B_m e_n=r^{2m}\frac{1+r^{2n}}{1+r^{2(n+m)}}e_n,
\end{align*}
for every $n \geq 1$. Therefore,
\[
F_me_n=\left(1-\frac{1+r^{2n}}{1+r^{2(n-m)}}\right)\left(1-r^{2m}\frac{1+r^{2n}}{1+r^{2(n+m)}}\right)e_n,
\]
for each $m \in \mathbb N$ and $n \in \mathbb Z$. For each fixed $m \in \mathbb N$, since the scalar coefficient
\[
\left(1-\frac{1+r^{2n}}{1+r^{2(n-m)}}\right)\left(1-r^{2m}\frac{1+r^{2n}}{1+r^{2(n+m)}}\right) \longrightarrow 0,
\]
as $|n|\to\infty$, we conclude that $F_m$ is a compact diagonal operator. Define
\[
K_m = TF_m+F_mT-F_mTF_m \qquad (m\geq 1).
\]
Since $F_m$ is compact, $K_m$ is compact. Moreover,
\[
T-K_m = T-TF_m - F_mT + F_mTF_m = (I-F_m)T(I-F_m).
\]
Since $F_m=(I-A_m)(I-B_m)$, we have
\[
T-K_m = (I-F_m)T(I-F_m)=(A_m+B_m-A_mB_m)T(A_m+B_m-A_mB_m).
\]
Using $A_m=S_1^mS_1^{*m}$ and $B_m=S_2^mS_2^{*m}$, we conclude that
\[
\|T-K_m\| \longrightarrow 0 \text{ as } m \rightarrow \infty,
\]
that is, $T$ is a norm limit of compact operators and is therefore compact. This completes the proof.
\end{proof}

The following result was proved by Abrahamse \cite[Corollay 2.12]{A1974} for multiply connected domains. In the case of the annulus, it follows from Theorems \ref{thm: TO} and \ref{compathm}.

\begin{corollary}
The only compact Toeplitz operator on the annulus is the zero operator.
\end{corollary}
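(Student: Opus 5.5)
Let $T = T_\varphi$ be a compact Toeplitz operator with $\varphi \in L^\infty(\partial \A)$, identified with $(\varphi_1, \varphi_r)$ via $\Psi$. The plan is to combine the splitting of Theorem \ref{thm: TO} with the compactness criterion of Theorem \ref{compathm}. By Theorem \ref{thm: TO}, $T = X + Y$ with
\[
S_1^* X S_1 = X \quad \text{and} \quad S_2^* Y S_2 = Y.
\]
The proof of (3)$\implies$(1) shows that $X = T_{(\varphi_1,0)}$ and $Y = T_{(0,\varphi_r)}$. Explicitly, $X = \frac{1}{1-r^2}(S_1^*TS_1 - r^2T)$ and $Y = \frac{1}{1-r^2}(T - S_1^*TS_1)$.

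First I show that $X$ and $Y$ are compact. Since $T$ is compact, so is $S_1^* T S_1$, and therefore both $X$ and $Y$ are compact, being linear combinations of compact operators.

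Next I apply Theorem \ref{compathm}, or really only its easy necessity direction, to $X$. Iterating $S_1^*XS_1 = X$ gives
\[
X = S_1^{*m} X S_1^m \quad \text{for all } m,
\]
and the right-hand side tends to $0$ in norm because $X$ is compact. Hence $X = 0$. In the same way, $Y = S_2^{*m} Y S_2^m \to 0$ in norm, so $Y = 0$. Therefore $T = X + Y = 0$.

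This is essentially a two-line argument, and no step is a real obstacle. The only point needing care is that compactness of $T$ passes to $X$ and $Y$, which holds because they are given by the explicit formulas \eqref{eqn: X & Y}. If one also wants the symbol to vanish, note that $X = 0$ forces $\widehat{\varphi_1}(n) = 0$ for all $n$, and $Y = 0$ forces $\widehat{\varphi_r}(n) = 0$ for all $n$, by \eqref{toeplitz-coefficientss} together with the computations in the proof of Theorem \ref{thm: TO}. Thus $\varphi = 0$ almost everywhere.
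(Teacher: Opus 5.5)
Your proof is correct and follows essentially the paper's argument: both take the decomposition $T = X+Y$ from Theorem \ref{thm: TO} and apply the necessity direction of Theorem \ref{compathm}. The only difference is that you first transfer compactness to $X$ and $Y$ through their explicit formulas in terms of $T$ and $S_1^*TS_1$, and then kill each summand separately. The paper instead applies Theorem \ref{compathm} to $T$ itself: it uses $S_1^{*m}TS_1^m = X + r^{2m}Y \to 0$ to get $X=0$, and then $Y = T = S_2^{*m}TS_2^m \to 0$.
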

\begin{proof}
Let $T \in \clb(H^2(\A))$ be a compact Toeplitz operator. Then from Theorem \ref{thm: TO} (also see \eqref{eqn: X+Y=T} and \eqref{eqn MYM = r2Y}), there exist $X,Y \in \clb(H^2(\A))$ such that
\[
M_z^*XM_z=X \text{ and } M_z^*YM_z=r^2Y,
\]
and
\[
X+Y=T.
\]
For each $m \geq 1$, we have
\[
X+r^{2m}Y = M_z^{*m} X M_z^m + M_z^{*m} Y M_z^m = S_1^{*m}TS_1^m,
\]
and hence $\|X+r^{2m}Y\| = \|S_1^{*m}TS_1^m\|$. Since $T$ is compact, Theorem \ref{compathm} implies that $\|X+r^{2m}Y\| \to 0$ as $m \to \infty$. Because $r \in (0,1)$, it follows that $X=0$. Thus $T=Y$. Now for each $m\geq 1$,
\begin{align*}
\|Y\|=\|(rM_z^{-1})^*YrM_z^{-1}\|=\|S_2^*TS_2\|=\|S_2^{*m}TS_2^m\|.
\end{align*}
Again, by Theorem \ref{compathm} and the compactness of $T$, $\|Y\|=\|S_2^{*m}TS_2^m\| \to 0$. Hence $Y=0$, and therefore $T=0$.
\end{proof}

It may be more than a coincidence that the characterization of compact operators on the annulus in Theorem \ref{compathm} resembles the characterization on the polydisc in \cite[Theorem 3.2]{MSS2018}, even though the two domains are quite different. From this perspective, we make the following two comments.

First, we recall the construction of the operator $F_m$ in Theorem \ref{compathm}:
\[
F_m=(I-A_m)(I-B_m) \qquad (m \geq 1).
\]
Similar operators also appear in the proof of the compactness characterization on the polydisc \cite[Theorem 3.2]{MSS2018}. The main difference is that, in the polydisc case, these operators are always finite-rank, whereas in the annulus case, the operators $F_m$ need not be finite-rank.

Second, we note that assuming only
\[
\|S_i^{*m}TS_i^m\| \longrightarrow 0 \text{ as } m \to \infty \qquad (i=1,2),
\]
in Theorem \ref{compathm} would not yield the desired characterization of compactness. Indeed, consider the unitary operator $J \in \clb(H^2(\A))$ defined by
\[
Je_n=e_{-n}, \quad n \in \mathbb Z.
\]
Clearly, $J$ is not compact. However, it follows from \eqref{eqn: S_1men} and \eqref{eqn: S_2men} that $\|S_1^{*m}JS_1^m\|\leq r^m$ and $\|S_2^{*m}JS_2^m\|\leq r^m$ for each $m\geq 1$. Hence,
\[
\|S_i^{*m}TS_i^m\| \longrightarrow 0 \text{ as } m \to \infty \qquad (i=1,2),
\]
However,
\[
\|S_1^{*m}JS_2^m\|=1,
\]
for every $m\geq1$.

\section{Asymptotic Toeplitz operators}\label{sec: ATO}

Compact perturbations of well-known operators are of broad interest. Here, we study compact perturbations of Toeplitz operators and identify their key properties in the following definition. This class has previously been studied in \cite{BH1982, F1988, MSS2018}.

\begin{definition} \label{asymtopdef}
An operator $T \in \clb(H^2(\A))$ is said to be asymptotic Toeplitz if there exist $J_1, J_2 \in \clb(H^2(\A))$ such that
\[
S_1^{*m}TS_1^m \longrightarrow J_1 \mbox { and } S_2^{*m}TS_2^m \longrightarrow J_2 \mbox{ as } m \rightarrow \infty,
\]
and
\[
S_p^{*m} \bigl(T - J_1 - J_2 \bigr) S_q^m \longrightarrow 0 \mbox{ as } m \rightarrow \infty,
\]
in norm for all $p, q=1,2$.
\end{definition}

We now prove that the asymptotic Toeplitz operators are precisely the compact perturbations of Toeplitz operators.

\begin{theorem}\label{thm: ATO}
An operator $T \in \clb(H^2(\A))$ is asymptotically Toeplitz if and only if $T$ is a compact perturbation of a Toeplitz operator.
\end{theorem}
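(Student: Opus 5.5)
The plan is to prove both implications by reducing them to Theorem \ref{thm: TO} (the decomposition $T_\varphi = X + Y$) and Theorem \ref{compathm} (the shift characterization of compact operators). The only extra ingredient is how $X$ behaves under $S_2$ and how $Y$ behaves under $S_1$.

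\emph{Necessity.} Suppose $T = T_\varphi + K$ with $\varphi \in L^\infty(\partial\A)$ and $K$ compact. By Theorem \ref{thm: TO}, write $T_\varphi = X + Y$ with $S_1^*XS_1 = X$ and $S_2^*YS_2 = Y$. By \eqref{eqn MYM = r2Y}, the second relation is equivalent to $S_1^*YS_1 = r^2 Y$. Symmetrically, using $S_1S_2 = rI$, we get
\[
S_2^* X S_2 = S_2^*(S_1^*XS_1)S_2 = (S_1S_2)^* X (S_1S_2) = r^2 X.
\]
Iterating these relations gives
\[
S_1^{*m} T_\varphi S_1^m = X + r^{2m} Y \to X \quad\text{and}\quad S_2^{*m} T_\varphi S_2^m = r^{2m}X + Y \to Y
\]
in norm. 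By the necessity part of Theorem \ref{compathm}, $S_i^{*m}KS_i^m \to 0$ in norm. Hence $S_1^{*m}TS_1^m \to J_1 := X$ and $S_2^{*m}TS_2^m \to J_2 := Y$. Then $T - J_1 - J_2 = K$ is compact, and Theorem \ref{compathm} again gives $S_p^{*m}(T-J_1-J_2)S_q^m \to 0$ in norm for all $p,q = 1,2$. So $T$ is asymptotic Toeplitz in the sense of Definition \ref{asymtopdef}.

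\emph{Sufficiency.} Suppose $J_1, J_2$ are as in Definition \ref{asymtopdef}. First I will show that $S_1^*J_1S_1 = J_1$. Left and right multiplication by the fixed bounded operators $S_1^*$ and $S_1$ is norm continuous, so
\[
S_1^*J_1S_1 = \lim_{m} S_1^*\bigl(S_1^{*m}TS_1^m\bigr)S_1 = \lim_m S_1^{*(m+1)}TS_1^{m+1} = J_1.
\]
The same argument gives $S_2^*J_2S_2 = J_2$. Thus $X := J_1$ and $Y := J_2$ satisfy condition (3) of Theorem \ref{thm: TO} for the operator $J_1 + J_2$. The implication (3)$\Rightarrow$(1) of that theorem then shows that $J_1 + J_2$ is a Toeplitz operator. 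Next, $K := T - J_1 - J_2$ satisfies $S_p^{*m}KS_q^m \to 0$ in norm for all $p,q$ by hypothesis, so $K$ is compact by Theorem \ref{compathm}. Therefore $T = (J_1+J_2) + K$ is a compact perturbation of a Toeplitz operator.

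The step I expect to need the most care is verifying the twisted intertwining relations, namely $S_2^*XS_2 = r^2X$ and $S_1^*YS_1 = r^2Y$. These are what make the limits $J_1, J_2$ in the necessity direction come out as exactly $X$ and $Y$. Without them, the cross terms would not decay, and the identification of $T - J_1 - J_2$ with $K$ would fail. Once these relations are in place, both directions are direct applications of Theorems \ref{thm: TO} and \ref{compathm}.
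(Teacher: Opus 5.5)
Your proposal is correct and follows essentially the same route as the paper: both directions reduce to Theorem \ref{thm: TO} and Theorem \ref{compathm}, and in the direction compact perturbation $\Rightarrow$ asymptotic Toeplitz your limits $X, Y$ are exactly the paper's $T_{\varphi\chi_{\bT}}$ and $T_{\varphi\chi_{r\bT}}$. The differences are minor. In that direction you obtain $S_1^{*m}T_\varphi S_1^m = X + r^{2m}Y$ and $S_2^{*m}T_\varphi S_2^m = r^{2m}X + Y$ algebraically from $S_1S_2 = rI$, rather than by the paper's integral computation with the split symbol. In the direction asymptotic Toeplitz $\Rightarrow$ compact perturbation, you apply (3)$\Rightarrow$(1) directly to $J_1+J_2$, which uses only existence of the decomposition, just as the paper's proof of that implication does; the paper instead shows each $J_i$ is Toeplitz separately and tracks where its symbol vanishes, so your version is slightly cleaner.
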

\begin{proof}
Suppose that $T$ is asymptotic Toeplitz. There exist $J_j \in \clb(H^2(\A))$, $j=1,2$, such that $S_j^{*m} T S_j^m \longrightarrow J_j$ as $m \to \infty$ and
\[
S_p^{*m} \bigl(T- J_1 - J_2 \bigr) S_q^m \longrightarrow 0 \mbox{ as } m \rightarrow \infty,
\]
for all $p, q=1,2$. Since $M_z^{*m}T M_z^m = S_1^{*m}TS_1^m \longrightarrow J_1$ as $m \rightarrow \infty$ in norm, we have
\[
M_z^{*k} J_1 M_z^k=J_1 \qquad (k \geq 1).
\]
In particular, taking $k=1,2$, we have
\[
-M_z^{*2} J_1 M_z^2 + (1+r^2) M_z^* J_1M_z=-J_1 + (1+r^2) J_1 = r^2J_1.
\]
Hence, by Theorem \ref{thm: TO}, $J_1$ is a Toeplitz operator on $H^2(\A)$. Therefore, there exists $\varphi_1 \in L^\infty(\partial \A)$ such that $J_1 = T_{\varphi_1}$. Again, by Theorem \ref{thm: TO}, there exist $X_1, Y_1 \in \clb(H^2(\A))$ such that $J_1=X_1+Y_1$. Moreover, we know that $M_z^* X_1 M_z = X_1$ and $M_z^* Y_1 M_z = r^2 Y_1$ (see also \eqref{eqn MYM = r2Y}). Combined with the fact that $M_z^* J_1 M_z=J_1$, this implies that
\[
Y_1=0.
\]
It follows from the proof of Theorem \ref{thm: TO}, specifically $(2) \implies (3)$ direction, that $\varphi_1=0$ on a.e. on $r\bT$. Indeed, in the $(2) \implies (3)$ direction, $Y_1 = 0$ implies $A_2 = 0$, and hence the corresponding symbol $\widetilde \varphi =0$ in $L^\infty(\bT)$. Thus, $\phi_r =0$ on $r\bT$. In the present setting, this implies that $\varphi_1=0$ a.e. on $\bT$ (here, we also use the fact that a Toeplitz operator on $H^2(\bD)$ is zero if and only if its symbol is zero almost everywhere). As in the preceding case, the norm convergence $S_2^{*m} T S_2^m\longrightarrow J_2$ as $m \rightarrow \infty$ implies that $J_2$ is also a Toeplitz operator. Hence, there exist $\varphi_2 \in L^\infty(\partial \A)$ such that $J_2 = T_{\varphi_2}$. As before, we conclude that $\varphi_2 = 0$ a.e. on $ \bT$. Set
\[
\varphi:=\varphi_1 + \varphi_2.
\]
Since $T$ is asymptotic  Toeplitz,
\[
T - J_1 - J_2 = T - T_{\varphi_1} - T_{\varphi_2} = T - T_\varphi,
\]
is compact by Theorem \ref{compathm}. Hence, $T$ is a compact perturbation of a Toeplitz operator.

\noindent Conversely, suppose $T=T_\varphi + K$ for some $\varphi \in L^\infty(\partial \A)$ and a compact operator $K \in \clb(H^2(\A))$. Define $\varphi_1, \varphi_2 \in L^\infty(\partial \A)$ by
\[
\varphi_1 = \varphi \chi_{_\bT} \text{ and } \varphi_2 = \varphi\chi_{_{r\bT}},
\]
and define
\begin{align*}
J_1: = T_{\varphi_1} \text{ and } J_2:= T_{\varphi_2}.
\end{align*}
For $f,g\in H^2(\A)$, since
\begin{align*}
\left\langle M_z^{*m} T_\varphi M_z^m f, g \right\rangle & = \left\langle T_\varphi M_z^mf,M_z^mg\right\rangle
\\
& = \int_{\partial\A}\varphi(z)|z|^{2m}f(z)\overline{g(z)} d\sigma(z)
\\
& = \int_{\bT}\varphi_1(z) f(z)\overline{g(z)} d\sigma(z) + r^{2m}\int_{r \bT} \varphi_2(z) f(z) \overline{g(z)} d\sigma(z),
\end{align*}
we conclude that
\[
M_z^{*m}T_\varphi M_z^m = J_1 + r^{2m} J_2.
\]
Since $0<r<1$, it follows that
\begin{align*}
M_z^{*m}T_\varphi M_z^m \longrightarrow J_1 \mbox{ as } m \rightarrow \infty,
\end{align*}
in operator norm. Again, for $f,g\in H^2(\A)$, since
\begin{align*}
\left\langle S_2^{*m}T_\varphi S_2^mf, g\right\rangle & = \int_{\partial\A}\varphi(z)r^{2m}|z|^{-2m}f(z)\overline{g(z)}\,d\sigma(z)
\\
& = r^{2m} \int_{\mathbb T}\varphi_1(z)f(z)\overline{g(z)}\,d\sigma(z) + \int_{\mathbb T_r}\varphi_2(z)f(z)\overline{g(z)}\,d\sigma(z),
\end{align*}
we have
\[
S_2^{*m}T_\varphi S_2^m=r^{2m}J_1 + J_2,
\]
and hence
\begin{align*}
S_2^{*m}T_\varphi S_2^m
\longrightarrow J_2 \mbox { as } m \rightarrow \infty,
\end{align*}
in operator norm. The remaining part follows from Theorem \ref{compathm}.
\end{proof}

We conclude by noting that our results suggest that the operators
\[
S_1:= M_z \qquad\text{and}\qquad S_2:=rM_z^{-1},
\]
play a crucial role in understanding Toeplitz operators on $H^2(\A)$. Of course, these operators are related: both are defined in terms of the shift operator $M_z$, they are connected to the shift on the Hardy space of the disc through Lemma \ref{embeddings}, and they satisfy
\[
S_1 S_2 = r I_{H^2(\A)}.
\]
Our results highlight their significance and suggest that these operators will continue to be useful in the study of operators on the Hardy space of the annulus.

\vspace{0.1in}
\noindent\textbf{AI Declaration:} The results and proofs in this paper were not generated by AI.

\vspace{0.1in}

\noindent\textbf{Acknowledgement:}
The first named author is supported by NBHM Postdoctoral Fellowship No. 0204/37(2)/NBHM/R\&D-II/16542. The research of the second named author is supported in part by ANRF, Department of Science \& Technology (DST), Government of India (File No: ANRF/ARGM/2025/000130/MTR).

{}

\end{document}